\numberwithin{theorem}{section}
\numberwithin{equation}{theorem}
\numberwithin{equation}{theorem}
\def\todo#1{\textcolor{red}%
{\footnotesize\newline{\color{red}\fbox{\parbox{\textwidth-15pt}{\textbf{todo: } #1}}}\newline}}
\newcommand\reallywidehat[1]{%
\savestack{\tmpbox}{\stretchto{%
  \scaleto{%
    \scalerel*[\widthof{\ensuremath{#1}}]{\kern.1pt\mathchar"0362\kern.1pt}%
    {\rule{0ex}{\textheight}}
  }{\textheight}%
}{2.4ex}}%
\stackon[-6.9pt]{#1}{\tmpbox}%
}
\numberwithin{theorem}{section}
\begin{document}

\title{The Brian\c{c}on-Skoda Theorem via weak functoriality of big Cohen-Macaulay algebras}
\author{Sandra Rodr\'iguez-Villalobos}
\address{Department of Mathematics, University of Utah, Salt Lake City, UT 84112, USA}
\email{rodriguez@math.utah.edu}
\author{Karl Schwede}
\address{Department of Mathematics, University of Utah, Salt Lake City, UT 84112, USA}
\email{schwede@math.utah.edu}

\begin{abstract}
    We prove, given a sufficiently functorial assignment from rings to big Cohen-Macaulay algebras $R \mapsto B$, that the associated big Cohen-Macaulay closure operation on ideals $I \mapsto I B \cap R$ necessarily satisfies the Brian\c{c}on-Skoda type property.  The proof combines arguments of Lipman-Teissier, Hochster, Ma, and Hochster-Huneke.   Specializing to mixed characteristic, and utilizing a result of Bhatt on absolute integral closures, this recovers a slight strengthening of a result of Heitmann.
\end{abstract}
\subjclass[2020]{13B22, 13A30, 13D22, 14B05, 13A35}
\maketitle

\section{Introduction}

The Brian\c{c}on-Skoda theorem relates integral closures of powers of ideals with ordinary powers of ideals
\[
    \overline{J^{n+\lambda-1}} \subseteq J^{\lambda}
\]
for integers $\lambda \geq 1$ where $J = (f_1, \dots, f_n)$ is generated by $n$ elements.  This was originally shown when the ambient ring is $\bC[x_1, \dots, x_d]$ in \cite{BrianconSkoda}, and an algebraic proof was given in \cite{LipmanSatheyeJacobianIdealsAndATheoremOfBS}.  There have since been many generalizations to classes of Noetherian rings with mild singularities.  Indeed, Lipman and Teissier showed that for a $d$-dimensional pseudo-rational Noetherian local ring, we have that 
\[
    \overline{J^{d+\lambda-1}} \subseteq J^{\lambda},
\]
\cite{LipmanTeissierPseudoRational}.  Under moderate hypotheses,\footnote{infinite residue field} every ideal can be generated by at most $d = \dim R$ elements up to integral closure \cite[Proposition 8.3.7]{HunekeSwansonIntegralClosure} (by  taking a minimal reduction) so that the Lipman-Teissier bound is not so dissimilar to the one from \cite{BrianconSkoda}, also see \cite[Corollary 2.2]{LipmanTeissierPseudoRational}.

While one does not expect that such results hold in arbitrarily singular rings, there have been numerous generalizations up to a closure operation on the term $J^{\lambda}$ (as well as other sorts of generalizations, see for instance \cite{LipmanSatheyeJacobianIdealsAndATheoremOfBS,HunekeUniformBounds,LipmanAdjointsAndPolarsOfIdeals,LazarsfeldPositivity2}).  The most notable example of the use of closure operations in this way is \cite{HochsterHunekeTC1}, where Hochster and Huneke showed that 
\[
    \overline{J^{n+\lambda-1}} \subseteq (J^{\lambda})^*
\]
for any Noetherian domain of characteristic $p > 0$.  Here $(-)^*$ denotes tight closure.  Later, they deduced the result for $+$-closure (still in characteristic $p > 0$): 
\[
    \overline{J^{n+\lambda-1}} \subseteq (J^{\lambda})R^+ \cap R =: (J^{\lambda})^+
\]
where $R^+$ is the absolute integral closure of $R$, see \cite[Theorem 7.1]{HochsterHunekeApplicationsofBigCM}.  These results have been generalized to characteristic zero for many closure operations via comparison with (methods related to) tight closure or plus closure in characteristic $p > 0$, see for example \cite[Theorem 1.3.7]{HochsterHunekeTightClosureInEqualCharactersticZero}, \cite[Proposition 8.6]{BrennerRescueSolid},  \cite[Theorem 6.13]{AschenbrennerSchoutensLefschetzExtensionsBigCM}.  

In mixed characteristic, we have such results for certain extensions of plus closure (extended plus and rank 1 closures) and even have variants of it for $+$-closure itself \cite{HeitmannPlusClosureMixedChar,HeitmannExtensionsOfPlusClosure}, also see \cite[Theorem 3.4]{HochsterVelezDiamondClosure}.  Indeed, the centrality of the Brian\c{c}on-Skoda theorem for closure operations has lead to Murayama coining as an axiom of potential closure operations \cite[Axiom 3.7]{MurayamaSymbolicTestIdeal}.

Big Cohen-Macaulay algebras themselves yield well behaved closure operations on ideals.  Indeed, if $B$ is a big Cohen-Macaulay $R$-algebra, then we can define the $B$-closure of any ideal $I \subseteq R$ to simply be $IB \cap R$ (such a closure defined by such an extension-contraction is called an \emph{algebra closure}).  For instance, under moderate hypotheses in characteristic $p > 0$, $B$-closure for  sufficiently large $B$ agrees with tight closure \cite[Theorem 11.1]{HochsterSolidClosure}.  For a comparison of closure operations inducing and coming from big Cohen-Macaulay algebras see \cite{DietzACharacterizationOfClosureOpsInducingBCM,RGClosureOperationsThatInducedBCM}.

The purpose of this article is to show that any closure operation defined by sufficiently functorial choices of big Cohen-Macaulay algebras automatically satisfies the Brian\c{c}on-Skoda theorem.

\begin{mainthm*}[{\autoref{MainTheorem.FullVersion}}]
    Suppose $(R, \fram)$ is a local excellent Noetherian domain and we are given a weakly functorial\footnote{In fact, we only need weak functoriality for $R$-algebra surjections, see \autoref{def.WeaklyFunctorialBCMAssignments}.} assignment from essentially of finite type local integral domain $R$-algebras $(S, \fran) \supseteq (R, \fram)$ to balanced big Cohen-Macaulay algebras, $S \mapsto B_S$.  Suppose $J \subseteq R$ is an ideal that can be generated by $n$ elements.

    Then for any integer $\lambda \geq 1$:
    \[ 
        \overline{J^{n + \lambda-1}} \subseteq J^{\lambda} B_R \cap R.
    \]
\end{mainthm*}

For parameter ideals, we can easily explain exactly what functoriality we need.

\begin{theoremA*}[{\autoref{MainTheorem.ParameterIdealCaseGeneralized}}]
        Suppose $(R, \fram)$ is a normal local excellent domain and $J \subseteq R$ is an ideal generated by a partial system of parameters $J = (f_1, \dots, f_n)$.  Let $S = R[Jt]^{\nm}$ denote the normalized Rees algebra with homogeneous maximal ideal $\fran = \fram S + S_{> 0}$.  Suppose we have a commutative diagram:
    \[
        \xymatrix{
            S_{\frn} \ar@{->>}[r] \ar[d] & R \ar[d] \\
            B \ar[r] & C.
        }
    \]
    where the top horizontal map is projection onto the degree 0 part and where $B$ and $C$ are balanced big Cohen-Macaulay $S_{\frn}$ and $R$-algebras respectively.  Then for any integer $\lambda \geq 1$:
    \[ 
        \overline{J^{n + \lambda-1}} \subseteq J^{\lambda} C \cap R.
    \]
\end{theoremA*}
The normal hypothesis on $R$ can be weakened, see the referenced theorem.  Note, if for instance $S_{\frn}$ is Cohen-Macaulay, then we take $C$ to be \emph{any} balanced big Cohen-Macaulay $R$-algebra.

Regardless, in mixed characteristic, by utilizing \cite{BhattAbsoluteIntegralClosure}, \cite[Corollary 2.10]{BMPSTWW-MMP} to create our weakly functorial big Cohen-Macaulay algebras, our main result specializes to a partial strengthening of some of the main results of \cite{HeitmannPlusClosureMixedChar,HeitmannExtensionsOfPlusClosure}.  

\begin{cor*}[{\autoref{cor.HeitmannExtension}}]
    Suppose $(R, \fram)$ is an excellent Noetherian local domain of mixed characteristic $(0, p> 0)$.  Let $\widehat{R^+}$ denote the $p$-adic completion of the absolute integral closure of $R$.  Then for any ideal $J \subseteq R$ generated by $n$ elements, and any integer $\lambda \geq 1$
    \[
        \overline{J^{n + \lambda-1}} \subseteq J^{\lambda} \widehat{R^+} \cap R.
    \]
\end{cor*}

\subsection{An outline of the proof}

We first prove our main result in the case that $\lambda = 1$ and $J$ is a parameter ideal.  For this we begin similarly to the method of Lipman-Teissier (\autoref{lem.LipmanTeissierWhichCohomologySentToZero}) which we then combine with an argument due to Ma in (\autoref{MainTheorem.ParameterIdealCase}) using a Sancho de Salas sequence.  To handle the case when $\lambda \geq 1$ we use an argument due to Hochster as presented in Lipman-Teissier, see \autoref{MainTheorem.ParameterIdealCaseGeneralized}.  To generalize to the case of non-parameter ideals, we mimic an argument of Hochster-Huneke which does a generic computation, see \autoref{MainTheorem.FullVersion}.

\subsection*{Acknowledgements}
Sandra Rodr\'iguez-Villalobos was supported by  NSF Grant DMS-2101800.  Karl Schwede was supported by NSF Grant \#2101800 and NSF FRG Grant DMS-1952522.  This material based upon work supported by the National Science Foundation under Grant No. DMS-1928930, while the authors were in residence at the Simons Laufer Mathematical Sciences Institute (formerly MSRI) in Berkeley, California, during the Spring 2024 semester.  The authors thank Linquan Ma, Ilya Smirnov, Shunsuke Takagi, and Kevin Tucker for valuable conversations.  The authors also thank Ray Heitmann, Linquan Ma, Irena Swanson and the referee for valuable comments on previous drafts.

\section{Background and preliminaries}

We begin by recalling Hochster's notion of big Cohen-Macaulay algebras.  See \cite{HochsterTopicsInTheHomologicalTheory,SharpCMProperttiesForBalancedBCM}, also compare with \cite[Section 2.1]{BhattAbsoluteIntegralClosure} for an essentially equivalent notion defined by local cohomology.

\begin{definition}[BCM algebras]
    Suppose $(R, \fram)$ is a Noetherian local ring.  A \emph{balanced big Cohen-Macaulay} $R$-algebra (or $R$-module) is an $R$-algebra $B$ (respectively an $R$-module) such that every system of parameters on $R$ is a regular sequence on $B$.\footnote{If at least one system of parameters becomes a regular sequence, the \emph{balanced} modifier is removed, but if one is willing to $\fram$-adically complete such a $B$, it becomes balanced \cite[Exercise 8.1.7 \& Theorem 8.5.1]{BrunsHerzog}.}  Such $B$ we call \emph{BCM} (note the balanced is implicit but suppressed).
\end{definition}

It is not obvious that any local ring has a balanced BCM module or algebra.  However, BCM modules and algebras exist quite generally.  In equal characteristic $p > 0$ or $0$, see for example \cite{HochsterTopicsInTheHomologicalTheory,HochsterHunekeInfiniteIntegralExtensionsAndBigCM,HochsterHunekeApplicationsofBigCM,SchoutensCanonicalBCMAlgebrasAndRational,HunekeLyubeznikAbsoluteIntegralClosure,MurayamaSymbolicTestIdeal}.  In mixed characteristic see for instance \cite{AndreWeaklyFunctorialBigCM,GabberMSRINotes,HeitmannMaBigCohenMacaulayAlgebraVanishingofTor,BhattAbsoluteIntegralClosure}.  

We will need the following well known result whose proof we include because we do not know of a suitable reference.  A different argument which can also be used to obtain the same result can be found in \cite{KovacsSerresConditionVanishingMathOverlow}.

\begin{lemma}
    \label{lem.GradeVanishingFact}
    Let $R$ be a Noetherian local ring.  
    Suppose $B$ is a BCM $R$-module.  If $I$ is an ideal of height\footnote{the minimum height of an associated prime of $I$} $r$, then $I$ contains a partial system of parameters of length $r$ and so
    \[
        H^i_I(B) = 0 \;\;\; \text{ for $i = 0, \dots, r-1$}.
    \]
\end{lemma}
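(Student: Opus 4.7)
The proof naturally splits into two tasks: exhibit a partial system of parameters of length $r$ inside $I$, then deduce the vanishing of $H^i_I(B)$ for $i<r$. For the first task, I would proceed by iterated prime avoidance. Set $d = \dim R$, and suppose we have already chosen $f_1,\dots,f_{i-1} \in I$ with $\height(f_1,\dots,f_{i-1}) = i-1$ for some $i \leq r$. Every minimal prime of $(f_1,\dots,f_{i-1})$ has height at most $i-1 < r = \height I$ by Krull's height theorem, so none of them contains $I$. Prime avoidance then yields $f_i \in I$ outside all such primes, and Krull's theorem gives $\height(f_1,\dots,f_i) = i$. After $r$ steps we obtain $f_1,\dots,f_r \in I$ with $\dim R/(f_1,\dots,f_r) = d-r$, so the sequence extends to a full system of parameters $f_1,\dots,f_d$ of $R$.

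For the vanishing, the crucial input is that $B$ is BCM, so $f_1,\dots,f_d$ is a regular sequence on $B$; in particular $f_1,\dots,f_r$ is $B$-regular. I then establish the following general fact by induction on $r$: if $g_1,\dots,g_r \in I$ is regular on an arbitrary $R$-module $M$, then $H^i_I(M) = 0$ for $0 \leq i < r$. The base case $r=1$ is immediate, since every element of $H^0_I(M) = \Gamma_I(M)$ is killed by some power of $I$, hence by a power of the nonzerodivisor $g_1$. For the inductive step, apply local cohomology to
\[
    0 \longrightarrow M \xrightarrow{g_1} M \longrightarrow M/g_1 M \longrightarrow 0.
\]
Since $g_2,\dots,g_r \in I$ is regular on $M/g_1 M$, the inductive hypothesis gives $H^{j}_I(M/g_1 M) = 0$ for $j < r-1$, and the associated long exact sequence forces multiplication by $g_1$ on $H^i_I(M)$ to be injective for $1 \leq i < r$. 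But every element of $H^i_I(M)$ is $I$-power torsion, hence annihilated by some power of $g_1$, so this injectivity forces $H^i_I(M) = 0$.

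The main subtlety to watch for is that $B$ is typically not finitely generated, so the familiar depth-via-$\Ext$ characterization is not directly available. Fortunately, the argument above only uses that local cohomology is functorial, admits long exact sequences, and takes values in $I$-torsion modules, all of which hold for arbitrary $R$-modules; so no finite generation of $B$ is needed and the induction goes through without issue.
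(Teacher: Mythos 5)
Your proof is correct. The vanishing half is essentially the paper's argument: both induct using the long exact sequence of local cohomology attached to $0 \to B \xrightarrow{f_1} B \to B/f_1B \to 0$ together with the fact that $H^i_I(-)$ is $I$-power torsion, so injectivity of multiplication by $f_1 \in I$ forces vanishing; you phrase this as a general statement for any module admitting a regular sequence from $I$ (correctly observing that no finite generation is needed), while the paper phrases it via $B/x_1B$ being BCM over $R/(x_1)$ --- the content is the same. Where you genuinely differ is in constructing the sequence: the paper maintains the partial-system-of-parameters condition $\dim R/(x_1,\dots,x_s)=\dim R-s$ directly at each prime-avoidance step, whereas you maintain $\height(f_1,\dots,f_i)=i$ and convert to the dimension statement only at the end. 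That conversion is true but asserted without justification, and it is the one step a reader must supply: since $\height(f_1,\dots,f_r)=r$, every minimal prime $P$ of $(f_1,\dots,f_r)$ has height exactly $r$ (at least $r$ because the height of the ideal is the minimum over such $P$, at most $r$ by Krull), hence $\dim R/P \le \dim R - r$; combined with the standard inequality $\dim R/J \ge \dim R - r$ for any $r$-generated ideal $J$, this yields $\dim R/(f_1,\dots,f_r)=\dim R-r$, so the sequence extends to a full system of parameters and is $B$-regular by balancedness. (The same strict-containment observation --- any minimal prime of $(f_1,\dots,f_i)$ properly contains a height-$(i-1)$ minimal prime of $(f_1,\dots,f_{i-1})$ because $f_i$ avoids the latter --- is also what upgrades Krull's upper bound to your claimed equality $\height(f_1,\dots,f_i)=i$.) Note that the direction matters: the converse statement, that a partial system of parameters generates an ideal of height equal to its length, fails in non-equidimensional rings, so it is worth a sentence in your write-up; the paper's bookkeeping by dimension sidesteps this conversion entirely.
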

\begin{proof}
    First we produce our sequence.  Let $s$ be the maximal length of a partial system of parameters, $x_1, \dots, x_s$, contained inside $I$.  We claim that $s = r$.  Suppose for a contradiction that $s < r$.  As every minimal prime $Q_1, \dots, Q_t$ of $(x_1, \dots, x_s)$ has height $\leq s$ (by Krull's height theorem), none of them can contain $I$.  Thus by prime avoidance there exists $x_{s+1} \in I$ not contained in any $Q_i$.  As $x_1, \dots, x_s$ is part of a system of parameters, $\dim R/(x_1, \dots, x_s) = \dim R - s$.  Thus, since $\overline{x_{s+1}}$ is not in any minimal prime of $R/(x_1, \dots, x_s)$, $\dim R/(x_1, \dots, x_{s+1}) = \dim R - s - 1$ and so $x_1, \dots, x_{s+1}$ is part of a system of parameters, a contradiction.

    Setting $x_1, \dots, x_r$ to be the guaranteed partial system of parameters, the vanishing follows by induction on $r$ (since $\overline{x_2}, \dots, \overline{x_r}$ is a system of parameters of $R/(x_1)$ and $B/x_1 B$ is BCM over $R/(x_1)$) and the long exact sequence 
    \[ 
        \dots \to H^{i-1}_I(B/x_1 B) \to H^i_I(B) \xrightarrow{\cdot x_1} H^i_I(B) \to H^i_I(B/x_1 B) \to \dots
    \] 
    since the map labeled $\cdot x_1$ cannot be injective unless $H^i_I(B) = 0$.
\end{proof}

\begin{definition}
    Suppose $R$ is a Noetherian domain.  For an ideal $I \subseteq R$, let $\widetilde{I}$ denote the integral closure of $IR^{\mathrm{N}}$  where $R^{\mathrm{N}}$ is the normalization of $R$.
\end{definition}

\begin{definition}[Special functoriality of Big Cohen-Macaulay algebras]
    Suppose $(R, \fram)$ is a local Noetherian domain and $J \subseteq R$ is an ideal.  Set $S = R \oplus \widetilde{J}t \oplus \widetilde{J^2}t^2 \oplus \dots$, assume it is Noetherian, and note we have a surjection $\pi : S \to R$ by projection onto degree zero.  Set $\frn = \fram S + S_{>0}$ (a maximal ideal of $S$ mapping to $\fram$).  We say that a BCM $R$-algebra $C$ is \emph{Rees-$J$ functorial} if there exists a BCM $S_{\frn}$-algebra $B$ so that the following diagram commutes:
    \[
        \xymatrix{
            S_{\frn} \ar[d] \ar@{->>}[r] & R \ar[d] \\
            B \ar[r] & C.
        }
    \]    
\end{definition}

\begin{definition}[Weakly functorial Big Cohen-Macaulay assignments]
    \label{def.WeaklyFunctorialBCMAssignments}
    Suppose we have a subcategory $\sC$ of Noetherian rings (not necessarily full).  We say an assignment for each ring $R \in \sC$ to a BCM $R$-algebra $B_R$ is \emph{weakly functorial} if for each map $f : S \to R$ in $\sC$, there exists some ring map $g : B_S \to B_R$ such that the following diagram commutes:
    \[
        \xymatrix{
            S \ar[d] \ar[r]^f & R \ar[d] \\
            B_S \ar[r]_g & B_R
        }
    \]

    The usual category we will need to work on is as follows.  Fix $(R, \fram)$ to be an excellent Noetherian local domain.  Consider the category of local $R$-algebras $(S, \fran)$ where $S$ is a local domain essentially of finite type over $R$, and the structural map $R \to S$ is local and injective.  The maps in this category are surjective $R$-algebra homomorphisms.  We denote this category:
    \begin{equation}
        \label{eq.OurCat.WeaklyFunctorialBCMAssignments}
        \sD_R.
    \end{equation}
    In the end, we can even restrict our category to a certain tree of finitely many surjective maps, see \autoref{rem.Functoriality}.
\end{definition}

In particular, the assignment of a Noetherian domain $R$ to $R^+$ in positive characteristic is weakly functorial \cite{HochsterHunekeInfiniteIntegralExtensionsAndBigCM}, and the assignment to the $p$-adic completion $\widehat{R^+}$ (which is BCM by \cite{BhattAbsoluteIntegralClosure}, \cite[Corollary 2.10]{BMPSTWW-MMP}) in mixed characteristic is weakly functorial (also see \cite{AndreWeaklyFunctorialBigCM}).  In characteristic zero, there are different approaches for certain maps of (certain) local rings and various sorts of weak functoriality, see for instance 
\cite{HochsterHunekeInfiniteIntegralExtensionsAndBigCM}, \cite[Theorem 2.3]{HochsterHunekeApplicationsofBigCM}, \cite{SchoutensCanonicalBCMAlgebrasAndRational}, \cite{DietzRGBCMViaUltraEqualChar0}, or \cite[Theorem 2.8]{MurayamaSymbolicTestIdeal}.  Some weak functoriality in characteristic zero can also be obtained by reducing to mixed characteristic and then inverting $p > 0$.


We record the following variant of the Sancho de Salas sequence for future reference.  It is a special case of a result found in \cite[Equation (SS), Page 150]{LipmanCohenMacaulaynessInGradedAlgebras}, see also \cite{HeitmannMaBigCohenMacaulayAlgebraVanishingofTor} as pointed out by the referee.  

\begin{proposition}[{\cite{SanchodeSalasBlowingupmorphismswithCohenMacaulayassociatedgradedrings,LipmanCohenMacaulaynessInGradedAlgebras,HeitmannMaBigCohenMacaulayAlgebraVanishingofTor}}] 
    \label{SanchoDeSalasNonMaximalVariant}
    Let $R$ be an excellent Noetherian domain.  Fix $J = (f_1, \dots, f_n)$ to be an ideal.  Set $\widetilde{J^m} = \overline{J^m R^{\mathrm{N}}}$ where $R^{\mathrm{N}}$ is the normalization of $R$.  Let $T = R \oplus \widetilde{J}t \oplus \widetilde{J^2}t^2 \oplus \dots$ denote the partially normalized Rees algebra.  Let $\pi : X = \Proj T \to \Spec R$.  Let $E \subseteq X$ denote the inverse image of $V(J)$.

    Then 
    \[
        [H^{n}_{JT+(Jt)T}(T)]_0 \to H^n_J(R) \to H^n_E(X, \cO_X) 
    \]
    is exact.
\end{proposition}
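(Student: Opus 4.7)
The plan is to deduce this as a specialization of the Sancho de Salas exact sequence proved in \cite[Equation (SS), page 150]{LipmanCohenMacaulaynessInGradedAlgebras}.  That result applies to any finitely generated graded Noetherian $R$-algebra $A = \bigoplus A_k$ with $A_0 = R$ together with an ideal $\fra \subseteq R$, and (writing $Y = \Proj A$, $\pi : Y \to \Spec R$, and $F = \pi^{-1}(V(\fra))$) produces a long exact sequence
\[
    \dots \to [H^i_{\fra A + A_+}(A)]_0 \to H^i_{\fra}(R) \to H^i_F(Y, \cO_Y) \to [H^{i+1}_{\fra A + A_+}(A)]_0 \to \dots
\]
Applied to $A = T$ (Noetherian because $R$ is excellent, so that $R^{\mathrm{N}}$ is module-finite over $R$ and the partially normalized Rees algebra $T \subseteq R^{\mathrm{N}}[t]$ is finitely generated over $R[Jt]$), $\fra = J$, and $i = n$, this gives precisely the three-term exactness claimed, except that the supporting ideal on the left is $JT + T_+$ rather than $JT + (Jt)T$.

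To close this gap I intend to show the radical equality $\sqrt{JT + (Jt)T} = \sqrt{JT + T_+}$, which forces the two local cohomology modules to agree.  The inclusion $\subseteq$ is immediate since $(Jt)T \subseteq T_+$.  For the reverse, it suffices to check that every homogeneous element $s \in T_+$ has some power in $(Jt)T$.  Writing $s = u t^k$ with $u \in \widetilde{J^k}$, the integral dependence relation $u^N + c_1 u^{N-1} + \dots + c_N = 0$ with $c_i \in J^{ki}$, multiplied through by $t^{kN}$, yields
\[
    s^N = -\sum_{i=1}^{N} (c_i t^{ki})\, s^{N-i}.
\]
Each $c_i$ is an $R$-linear combination of products of $ki$ generators drawn from $\{f_1, \dots, f_n\}$, and distributing the $ki$ factors of $t$ across those products exhibits $c_i t^{ki}$ as an element of $(Jt)^{ki} T \subseteq (Jt) T$.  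Therefore $s^N \in (Jt) T$, completing the radical equality.

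I expect the main point to verify is simply that Lipman's hypotheses in (SS) are truly satisfied by the partially normalized Rees algebra $T$ rather than only by the classical $R[Jt]$; once Noetherianness of $T$ is in hand, this is essentially formal.  The companion reference to \cite{HeitmannMaBigCohenMacaulayAlgebraVanishingofTor} in the proposition's attribution suggests an alternative route directly through the \v{C}ech complex on the sequence $f_1, \dots, f_n, f_1 t, \dots, f_n t$, which would recover the sequence already in the stated form and sidestep the radical computation entirely.
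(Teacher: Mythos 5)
Your proposal is correct and takes essentially the same route as the paper: both reduce to Lipman's sequence (SS) applied to $T$ with $i = n$ and the ideal taken to be $J$ (which need not be maximal in Lipman's setup), after noting that $JT + (Jt)T$ and $JT + T_{>0}$ have the same radical --- the paper merely asserts this identification, while you verify it. One small correction to your verification: for $u \in \widetilde{J^k} = \overline{J^k R^{\mathrm{N}}}$ the coefficients $c_i$ of the integral equation lie in $J^{ki}R^{\mathrm{N}}$ rather than $J^{ki}$, but the distribution argument goes through unchanged because $J^{ki-1}R^{\mathrm{N}} \subseteq \widetilde{J^{ki-1}}$, so that $c_i t^{ki} \in (Jt)T$ still holds.
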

\begin{proof}
    We need to translate our statement into the language of \cite{LipmanCohenMacaulaynessInGradedAlgebras}.  Notice first that $H^i_{JT + (Jt)T}(T) = H^i_{JT + T_{>0}}(T)$ as the ideals define the same sets.  Hence it suffices to show that $[H^{n}_{JT+T_{>0}}(T)]_0 \to H^n_J(R) \to H^n_E(X, \cO_X)$ is exact.  But this is what was shown in \cite[(SS)]{LipmanCohenMacaulaynessInGradedAlgebras} if one sets $i = n$, $\fram = J$, $G = N = T$, and $P = T_{>0}$ (notice that the ideal $\fram$ in Lipman's notation need not be maximal).  
\end{proof}

\section{The main result for parameter ideals}

We begin by recalling the following result of Lipman-Teissier, found within the proof of \cite[Theorem 2.1]{LipmanTeissierPseudoRational}.

\begin{lemma}[Lipman-Teissier]
    \label{lem.LipmanTeissierWhichCohomologySentToZero}
    Suppose that $R$ is a Noetherian domain, $J = (f_1, \dots, f_n)$ and $\pi : X \to \Spec R$ is the blowup of $J$ and set $E = \pi^{-1}(V(J))$.  Fix $h \in \Gamma(X, J^n \cO_X) \cap R$.  Then the \Cech{} class $\big[ {h \over f_1 \dots f_n}\big]$ is in the kernel of the map  
    \[
        H^n_{J}(R) \to H^n_E(X, \cO_X) = \myH^n \myR \Gamma_{J}(\myR \Gamma(X, \cO_X)).
    \]
    As a consequence, if $Y$ is (or factors through) the normalized blowup of $J$, then for any $h \in \overline{J^n}$, we have that $[ {h \over f_1 \dots f_n}\big]$ is in the kernel of
    \[
        H^n_{J}(R) \to \myH^n \myR \Gamma_{J}(\myR \Gamma(Y, \cO_Y)).
    \]  
\end{lemma}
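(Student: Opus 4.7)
The plan is to use the standard triangle
\[
    \myR \Gamma_E(X, \cO_X) \to \myR \Gamma(X, \cO_X) \to \myR \Gamma(X \setminus E, \cO_X) \xrightarrow{+1}
\]
together with the fact that the blowup is an isomorphism over $\Spec R \setminus V(J)$, giving $X \setminus E \cong \Spec R \setminus V(J)$.  Since $\Spec R$ is affine, the analogous triangle on $R$ yields an identification $H^n_J(R) \cong H^{n-1}(X \setminus E, \cO_X)$ for $n \geq 2$ (and $n = 1$ is trivial, as blowing up a principal ideal is an isomorphism), under which the map in question becomes the connecting homomorphism $\partial$, whose kernel is the image of $H^{n-1}(X, \cO_X) \to H^{n-1}(X \setminus E, \cO_X)$.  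So it suffices to lift the class $[h/(f_1 \cdots f_n)]$ from $H^{n-1}(X \setminus E, \cO_X)$ to $H^{n-1}(X, \cO_X)$.

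To produce the lift I would use the standard affine cover $U_i = D_+(f_i t) = \Spec R[f_1/f_i, \ldots, f_n/f_i]$ of $X = \Proj R[Jt]$, on which $J\cO_X$ is the principal ideal $(f_i)$.  Intersecting with $X \setminus E$ recovers the cover $\{D(f_i)\}$ of $\Spec R \setminus V(J)$, and the class $[h/(f_1\cdots f_n)]$ is represented by the top-degree \Cech cochain $h/(f_1\cdots f_n) \in R_{f_1\cdots f_n}$.  The key observation is that on $U_{1\cdots n}$ the elements $f_1, \ldots, f_n$ all differ by units (since $f_j/f_i$ and its inverse $f_i/f_j$ are both regular there), so
\[
    (f_1 \cdots f_n) \cO_X(U_{1\cdots n}) = (f_1^n) \cO_X(U_{1\cdots n}) = J^n \cO_X(U_{1\cdots n}).
\]
Because $h \in \Gamma(X, J^n \cO_X)$, the restriction $h|_{U_{1\cdots n}}$ lies in this ideal, so $h/(f_1 \cdots f_n)$ is actually an element of $\cO_X(U_{1\cdots n})$, not merely of its localization $R_{f_1 \cdots f_n}$.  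Top-degree \Cech cochains are automatically cocycles, so this produces a class in $H^{n-1}(X, \cO_X)$ lifting $[h/(f_1 \cdots f_n)]$, and the exact sequence forces the original to die in $H^n_E(X, \cO_X)$.

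For the consequence I would run the same argument on the normalized blowup $X^{\nm}$ in place of $X$, using the affine cover by the normalizations of the $U_i$.  The key additional input is $\overline{J^n} \cdot \cO_{X^\nm} \subseteq J^n \cO_{X^\nm}$: locally writing $J\cO_{X^\nm} = (\pi)$ and dividing an integral equation $h^m + c_1 h^{m-1} + \cdots + c_m = 0$ with $c_i \in J^{ni}$ by $\pi^{mn}$ yields an integral equation for $h/\pi^n$ over $\cO_{X^\nm}$, so $h/\pi^n \in \cO_{X^\nm}$ by normality and $h \in J^n \cO_{X^\nm}$.  For a general $Y$ factoring through $X^\nm$, pull the resulting lift back along $H^{n-1}(X^\nm, \cO_{X^\nm}) \to H^{n-1}(Y, \cO_Y)$ and repeat the long-exact-sequence argument on $Y$.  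The only real bookkeeping step is checking that the map in the statement is indeed identified with $\partial$ under the iso $X \setminus E \cong \Spec R \setminus V(J)$; this is standard naturality of the triangles induced by the structure map $R \to \myR\Gamma(X, \cO_X)$.
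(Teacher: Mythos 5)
Your proposal is correct and follows essentially the same route as the paper's proof (which is itself the Lipman--Teissier argument): identify $H^n_J(R)$ with $H^{n-1}(X\setminus E,\cO_X)$ via the isomorphism over $\Spec R\setminus V(J)$, and kill the class by lifting the top-degree \Cech{} cocycle $h/(f_1\cdots f_n)$ to the cover by blowup charts, using that $(f_1\cdots f_n)\cO = J^n\cO$ on their total intersection since $J\cO$ is principal on each chart. Your treatment of the second statement via $\overline{J^n}\,\cO_{X^{\nm}} \subseteq J^n\cO_{X^{\nm}}$ is just a slightly more explicit version of the paper's appeal to $\Gamma(Y, J^n\cO_Y)\cap R = \overline{J^n}$, and your $n=1$ aside should additionally note that $h \in \Gamma(X, J\cO_X)\cap R = (f_1)$ makes the class itself vanish, exactly as the paper does.
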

\begin{proof}
    We recall the argument of Lipman-Teissier for the convenience of the reader.  
    We may assume $J \neq 0$.
    If $n = 1$ then $X = \Spec R$, the map $H^1_J(R) \to H^1_E(X, \cO_X)$ is the identity, and $H^1_J(R) = R[1/f_1]/R$.  If $h = xf_1 \in (f_1)$, then $[h/f_1] = [x/1]$ is already zero and we are done, so we may assume that $n \geq 2$.  
    Consider the standard blowup charts $X_i \subseteq X$ defined by the property that $f_i \cO_{X_i} = J \cO_{X_i}$.  Set $U = X \setminus E$ and notice that the $U_i := X_i \cap (X \setminus E)$ are identified with $\Spec R[1/f_i]$.  As $X \to \Spec R$ restricts to an isomorphism $U \to \Spec R \setminus V(J) =: W$, $H^{n-1}(U, \cO_X)$ is identified with $H^{n-1}(W, \cO_{\Spec R})$ which is isomorphic to $H^n_J(R)$ as $n \geq 2$.
    With this identification in mind, it suffices to show that $\big[ {h \over f_1 \cdots f_n}\big] \in H^{n-1}(U, \cO_X)$ maps to zero in $H^n_E(X, \cO_X)$.  
    
    Due to the exact sequence $H^{n-1}(X, \cO_X) \to H^{n-1}(U, \cO_X) \to H^n_E(X, \cO_X)$ we must show that $\big[ {h \over f_1 \dots f_n}\big]$ is the image of a class in $H^{n-1}(X, \cO_X)$.  We do this via \Cech cohomology aligning the cover $X_i$ of $X$ with the cover $U_i$ of $U$.  Set $V = \bigcap_i X_i$ and notice that since $f_i \cO_{X_i} = J\cO_{X_i}$, we have that $J^n \cdot \cO_V = (f_1 \cdots f_n) \cO_V$.  Thus considering $h \in \Gamma(V, J^n \cO_{X}) = \Gamma(V, (f_1 \cdots f_n) \cO_X)$, we see that $\big[ {h\over f_1 \cdots f_n}\big]$ exists as a \Cech class in 
    \[ 
        H^{n-1}(X, \cO_X) = \coker \Big(\prod_j \Gamma(\hat{X_j}, \cO_X) \to \Gamma(V, \cO_X) \Big) 
    \] 
    where $\hat{X_j} = \bigcap_{i \neq j} X_i$.  The first statement follows.

    For the second statement, we recall that $\Gamma(Y, J^n \cO_Y) \cap R = \overline{J^n}$.  The result follows.
\end{proof}


We continue to follow the argument of Lipman-Teissier \cite{LipmanTeissierPseudoRational}, now combining with an argument of Ma's taken from \cite[Section 3]{MaThevanishingconjectureformapsofTorandderivedsplinters}, \cite[Corollary 4.3]{HeitmannMaBigCohenMacaulayAlgebraVanishingofTor} or \cite[Proposition 5.11]{MaSchwedeSingularitiesMixedCharBCM}.

\begin{theorem}\label{MainTheorem.ParameterIdealCase}
    Suppose $(R, \fram)$ is an excellent local domain with normalization $R^{\mathrm{N}}$ and $f_1, \dots, f_n$ is part of a system of parameters which generates an ideal $J$.
    Then 
    \[
        \overline{(f_1, \dots, f_n)^{n}}  \subseteq (f_1, \dots, f_n) C
    \]
    where $C$ is any Rees-$J$ BCM $R$-algebra.
\end{theorem}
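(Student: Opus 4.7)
The plan is to fix $h \in \overline{(f_1,\dots,f_n)^n}$, represent it by the \Cech class $\eta := [h/(f_1\cdots f_n)] \in H^n_J(R)$, and show that $\eta$ maps to zero in $H^n_J(C)$. Since $f_1,\dots,f_n$ is a partial system of parameters on $R$ and $C$ is a balanced BCM $R$-algebra, this sequence is regular on $C$, so the natural map $C/JC \hookrightarrow H^n_J(C)$ sending $\bar c$ to $[c/(f_1\cdots f_n)]$ is injective. Under this injection $\eta$ corresponds to $h \pmod{JC}$, so the vanishing of $\eta$ in $H^n_J(C)$ will force $h \in JC$ as desired.

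To establish the vanishing, I would form the partially normalized Rees algebra $T := R \oplus \widetilde{J}t \oplus \widetilde{J^2}t^2 \oplus \dots$, so that $\pi \colon X = \Proj T \to \Spec R$ factors through the normalized blowup of $J$, with exceptional set $E = \pi^{-1}(V(J))$. By the second part of \autoref{lem.LipmanTeissierWhichCohomologySentToZero}, $\eta$ dies in $H^n_E(X,\cO_X)$. The Sancho de Salas sequence \autoref{SanchoDeSalasNonMaximalVariant} then exhibits $\eta$ as the image of some class $\tilde\eta \in [H^n_{J'}(T)]_0$, where $J' := JT + (Jt)T$. Because $\frn := \fram T + T_{>0}$ contains $J'$, this lift persists after localizing to $T_{\frn}$, giving a class in $H^n_{J'}(T_{\frn})$ mapping to $\eta$ under the degree-zero projection $T_{\frn} \twoheadrightarrow R$.

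The Rees-$J$ hypothesis now supplies the commutative square with BCM algebras $B$ over $T_{\frn}$ and $C$ over $R$; applying local cohomology with supports $J'T_{\frn}$ on the left and $JR$ on the right yields a commutative diagram through which $\tilde\eta$ travels, so it suffices to show $H^n_{J'}(B) = 0$. By \autoref{lem.GradeVanishingFact} this reduces to verifying that $J' T_{\frn}$ has height at least $n+1$. Here is the key computation: since $(Jt)T$ and $T_{>0}$ share the same radical in $T$ (the affines $D_+(f_i t)$ cover $\Proj T$), we have $V(J') = V(JT) \cap V(T_{>0}) = V(J) \subseteq \Spec R \hookrightarrow \Spec T$, which has codimension $n+1$ in the $(d+1)$-dimensional ring $T$; localizing at $\frn \supseteq J'$ preserves this height. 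The main obstacle is this bookkeeping — aligning the Sancho de Salas lift with the Rees-$J$ diagram after localization and confirming the height of $J'$ — after which the grade vanishing and the regular-sequence injection finish the proof.
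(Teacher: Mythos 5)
Your proposal is correct and follows essentially the same route as the paper's proof: Lipman--Teissier to kill the class in $H^n_E(X,\cO_X)$, the Sancho de Salas variant to lift it to $H^n_{J'}(S_{\frn})$, the Rees-$J$ square plus the grade-vanishing lemma (height of $J'$ equal to $n+1$) to annihilate it in $H^n_J(C)$, and the regular-sequence injectivity $C/JC \hookrightarrow H^n_J(C)$ to conclude $h \in JC$. The only differences are cosmetic (you spell out the height computation and the injectivity step slightly more explicitly than the paper does).
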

\begin{proof}
    Fix $h \in \overline{(f_1, \dots, f_n)^{n}}$.  
    Again we write $\widetilde{J^m} = \overline{J^m R^{\mathrm{N}}}$.  Let $S = R \oplus \widetilde{J}t \oplus \widetilde{J^2}t^2 \oplus \dots$ denote the partially normalized Rees algebra (fully normalized if $R$ is normal) so that $\pi : X = \Proj S \to \Spec R$ is the normalized blowup with $E = \pi^{-1}(V(J))$.  
    
    Set $J' = JS + S_{>0} \subset S$.  Since $\sqrt{(Jt)S} = S_{>0}$, we see that $JS + S_{>0}$ has the same radical as $JS + (Jt)S$ and so we can use them interchangeably when computing their local cohomology.   

    
    By our variant of the Sancho de Salas sequence \autoref{SanchoDeSalasNonMaximalVariant}:
    \[
        [H^n_{J'}(S)]_0 \to H^n_J(R) \to H^n_E(X, \cO_X)
    \]
    is exact and hence 
    \[ 
        \ker \big( H^n_J(R) \to H^n_E(X, \cO_X) \big) \subseteq \Image\big( H^n_{J'}(S_{\frn}) \xrightarrow{\psi} H^n_J(R) \big)
    \]
    where $\frn = \fram S + S_{>0}$.
    Therefore, by \autoref{lem.LipmanTeissierWhichCohomologySentToZero}, since $h \in \overline{J^n}$ we see that $[h/(f_1 \cdots f_n)] \in \Image \psi$. 
    
    By the hypothesis about $C$, there exists a commutative diagram
    \[
        \xymatrix{
            S_{\frn} \ar[d] \ar@{->>}[r] & R \ar[d] \\
            B \ar[r] & C
        }
    \]
    where $B$ and $C$ are BCM $S_{\frn}$ and $R$-algebras, respectively.  Notice the image of $J'$ in $R$ is $J R$.  We also notice that the height of $J'$ is $n+1$ (as the height of the parameter ideal $J$ was $n$).  We take local cohomology and obtain the diagram:
    \[
        \xymatrix{
            H^{n}_{J'}(S_{\frn}) \ar[d] \ar[r] & H^n_{J}(R) \ar[d] \\
            H^n_{J'}(B) \ar[r] & H^n_J(C)
        }
    \]
    By \autoref{lem.GradeVanishingFact} and our observation on the height of $J'$, we see that $H^n_{J'}(B) = 0$.  Thus, since $[h/(f_1 \cdots f_n)] \in \Image \psi$, we see that 
    \[ 
        [h/(f_1 \cdots f_n)] \mapsto 0 \in H^n_J(C).
    \]

    Since $C$ is BCM, and $J$ is a parameter ideal, we have that $C/JC \to H^n_J(C)$ injects (this follows as $H^n_J(C) = \varinjlim_i C/(x_1^i, \cdots, x_n^i)C$ and the maps in the system, $\times (x_1 \cdots x_n)$, inject due to \autoref{lemmaColonRegularSequence} immediately below), and hence $h \in JC$, as desired.
\end{proof}

\subsection{Generalizing beyond {$\lambda = 1$}}

To generalize \autoref{MainTheorem.ParameterIdealCase} beyond the case $\lambda = 1$, we follow an argument due to Hochster, found in 
\cite[Section 3]{LipmanTeissierPseudoRational}, for which we need our regular sequences on our big Cohen-Macaulay algebra to be permutable (that is, we explicitly use that it is balanced).

\begin{definition}
	We say that $f_1, \ldots, f_n$ is a permutable regular sequence if  $f_{\sigma(1)}, \ldots, f_{\sigma(n)}$ is a regular sequence for every $\sigma\in S_n$.
\end{definition}

The following well-known result will play a central role in reducing to the case $\lambda = 1$.
\begin{lemma} \label{lemmaColonRegularSequence}
	If $f_1,\ldots, f_n$ is a permutable regular sequence on a possibly non-Noetherian ring $R$, then 
	$$((f_1^{\lambda_1},\ldots, f_n^{\lambda_n})R: f_1^{\mu_1}\cdots f_n^{\mu_n})=(f_1^{\lambda_1-\mu_1},\ldots, f_n^{\lambda_n-\mu_n})R.$$
\end{lemma}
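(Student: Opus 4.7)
The plan is to induct on the total exponent $\mu := \mu_1 + \cdots + \mu_n$ (implicitly assuming $\mu_i \leq \lambda_i$ throughout, so that the exponents on the right-hand side are nonnegative), using the elementary identity $(I : ab) = ((I:a) : b)$ for ideals and ring elements. Writing $f_1^{\mu_1} \cdots f_n^{\mu_n} = f_j \cdot \big(f_1^{\mu_1} \cdots f_j^{\mu_j - 1} \cdots f_n^{\mu_n}\big)$ whenever some $\mu_j \geq 1$ reduces the problem to a single colon by $f_j$ at a time, which can then be fed back into the inductive hypothesis. The base case $\mu = 0$ is trivial. So the whole argument hinges on the single-element colon formula
\[
(f_1^{\lambda_1}, \ldots, f_n^{\lambda_n}) : f_j \;=\; (f_1^{\lambda_1}, \ldots, f_j^{\lambda_j - 1}, \ldots, f_n^{\lambda_n}) \quad \text{for } \lambda_j \geq 1.
\]

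As a preliminary step I would record the standard observation that powers of a permutable regular sequence again form a permutable regular sequence: given a permutation $\sigma$, the sequence $f_{\sigma(1)}^{a_{\sigma(1)}}, \ldots, f_{\sigma(n)}^{a_{\sigma(n)}}$ is obtained by taking positive powers of the regular sequence $f_{\sigma(1)}, \ldots, f_{\sigma(n)}$ in the same order, and it is classical that positive powers of a regular sequence (taken in the same order) remain a regular sequence. In particular, for any index $j$ and any $\lambda_1, \ldots, \lambda_n \geq 1$, the element $f_j$ (or any power of it) is a nonzerodivisor on $R/(f_1^{\lambda_1}, \ldots, \widehat{f_j^{\lambda_j}}, \ldots, f_n^{\lambda_n})$ -- just place the index $j$ last in the permutation.

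Granting this, the displayed colon formula follows quickly. The containment $\supseteq$ is clear since $f_j \cdot f_j^{\lambda_j - 1} = f_j^{\lambda_j}$. For $\subseteq$, if $x f_j = \sum_i c_i f_i^{\lambda_i}$, then setting $I' = (f_1^{\lambda_1}, \ldots, \widehat{f_j^{\lambda_j}}, \ldots, f_n^{\lambda_n})$ gives $(x - c_j f_j^{\lambda_j - 1}) f_j \in I'$, and the nonzerodivisor property from the preliminary step forces $x - c_j f_j^{\lambda_j - 1} \in I'$, hence $x \in (I', f_j^{\lambda_j - 1})$, which is the claimed right-hand side. Iterating this formula a total of $\mu$ times proves the lemma. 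The only real obstacle is the preliminary closure-under-powers fact, which genuinely requires the \emph{permutability} of the regular sequence (so that we are free to put $f_j$ last); for this reason, the hypothesis of a permutable regular sequence -- equivalently, a regular sequence on a balanced big Cohen-Macaulay algebra -- cannot be weakened.
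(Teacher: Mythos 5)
Your proof is correct, and it is worth noting that the paper itself offers no argument for this lemma at all --- it is stated as a ``well-known result'' (it is essentially the colon computation underlying Hochster's argument in Section 3 of Lipman--Teissier) --- so your write-up supplies the standard proof that the paper omits. The reduction via $(I:ab)=((I:a):b)$ to the one-element formula $(f_1^{\lambda_1},\dots,f_n^{\lambda_n}):f_j=(f_1^{\lambda_1},\dots,f_j^{\lambda_j-1},\dots,f_n^{\lambda_n})$ is sound, and the nonzerodivisor statement you need does follow from permutability combined with the classical fact that positive powers of a regular sequence, taken in the same order, again form a regular sequence; since the lemma is applied to big Cohen--Macaulay algebras, which are far from Noetherian, you should record that this classical fact needs no Noetherian hypothesis (the usual proof, by induction on the exponent using the exact sequences $0\to M/x^{a-1}M \to M/x^{a}M\to M/xM\to 0$, works for any ring and module). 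Two small points: in the induction you should acknowledge the degenerate case where an exponent drops to $0$, in which the ideal becomes the unit ideal and all remaining colons are trivially $R$, matching the right-hand side under the convention $f_j^0=1$ --- harmless, but it should be said; and your closing parenthetical equating ``permutable regular sequence'' with ``regular sequence on a balanced big Cohen--Macaulay algebra'' overstates things --- the relevant point is only that partial systems of parameters form permutable regular sequences on a balanced BCM algebra, which is why the lemma applies in \autoref{MainTheorem.ParameterIdealCase} and \autoref{MainTheorem.ParameterIdealCaseGeneralized}; this aside does not affect the correctness of your argument.
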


Using the previous lemma, we can describe the powers of $I$ as intersections of ideals generated by powers of the elements of the generating regular sequence.

\begin{lemma}[Hochster, \cf {\cite[Section 3]{LipmanTeissierPseudoRational}}]\label{LemmaIntersectionsOfParameterIdeals}
	If $I=(f_1,\ldots, f_n)R$ with $f_1,\ldots, f_n$ a permutable regular sequence on a possibly non-Noetherian ring $R$, then
	$$I^\lambda=\bigcap\limits_{\lambda_1,\ldots, \lambda_n}(f_1^{\lambda_1},\ldots, f_n^{\lambda_n})R$$
	where $(\lambda_1,\ldots, \lambda_n)$ runs through all $n$-tuples of strictly positive integers with 
	$\lambda_1+\ldots+ \lambda_n=\lambda+n-1$.
\end{lemma}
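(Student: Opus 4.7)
The plan is a double induction, first on $n$, then on $\lambda$. The inclusion $I^\lambda \subseteq \bigcap_{\lambda_1+\cdots+\lambda_n = \lambda+n-1}(f_1^{\lambda_1},\ldots,f_n^{\lambda_n})$ is pure pigeonhole: for a generating monomial $f_1^{a_1}\cdots f_n^{a_n}$ of $I^\lambda$ with $\sum a_i = \lambda$, the assumption $a_i \leq \lambda_i - 1$ for every $i$ would force $\sum a_i \leq \sum(\lambda_i - 1) = \lambda - 1$, a contradiction. Hence some $a_i \geq \lambda_i$ and the monomial lies in $(f_1^{\lambda_1},\ldots,f_n^{\lambda_n})$.

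For the reverse containment I induct on $n$; the base $n=1$ is trivial since there is only one admissible tuple. In the inductive step, the first order of business is a preliminary regularity claim extracted from the statement at stage $n-1$: for every $m \geq 1$, $f_n$ is a non-zero-divisor on $R/(f_1,\ldots,f_{n-1})^m$. Indeed, by the inductive hypothesis at stage $n-1$,
\[
(f_1,\ldots,f_{n-1})^m \;=\; \bigcap_{\substack{a_1+\cdots+a_{n-1}=m+n-2\\ a_i\geq 1}}(f_1^{a_1},\ldots,f_{n-1}^{a_{n-1}}),
\]
and since $f_1^{a_1},\ldots,f_{n-1}^{a_{n-1}},f_n$ is a permutable regular sequence, \autoref{lemmaColonRegularSequence} gives $((f_1^{a_1},\ldots,f_{n-1}^{a_{n-1}}):f_n) = (f_1^{a_1},\ldots,f_{n-1}^{a_{n-1}})$; taking the intersection on both sides yields the claim.

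Now I induct on $\lambda$ (for fixed $n$), with base $\lambda=1$ trivial. I group the tuples by the value $\lambda_n = k$ for $k=1,\ldots,\lambda$. Since $f_1,\ldots,f_{n-1}$ remains a permutable regular sequence in $R/(f_n^k)$, and the remaining exponents satisfy $\sum_{i<n}\lambda_i = (\lambda-k+1) + (n-1) - 1$, the inductive statement at stage $n-1$ applied in $R/(f_n^k)$ gives
\[
\bigcap_{\substack{\lambda_1+\cdots+\lambda_{n-1}=\lambda+n-1-k\\ \lambda_i\geq 1}}(f_1^{\lambda_1},\ldots,f_{n-1}^{\lambda_{n-1}},f_n^k) \;=\; (f_1,\ldots,f_{n-1})^{\lambda-k+1} + (f_n^k).
\]
Hence any $x$ in the full intersection lies in $(f_1,\ldots,f_{n-1})^{\lambda-k+1} + (f_n^k)$ for each $k=1,\ldots,\lambda$.

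Finally I cascade. Starting from the $k=1$ condition, write $x = \alpha_0 + f_n\beta_1$ with $\alpha_0 \in (f_1,\ldots,f_{n-1})^\lambda$. Applying the $k=2$ condition and subtracting $\alpha_0 \in (f_1,\ldots,f_{n-1})^{\lambda-1}$ gives $f_n\beta_1 \in (f_1,\ldots,f_{n-1})^{\lambda-1} + (f_n^2)$; the regularity of $f_n$ on $R/(f_1,\ldots,f_{n-1})^{\lambda-1}$ then produces $\beta_1 = \alpha_1 + f_n\beta_2$ with $\alpha_1 \in (f_1,\ldots,f_{n-1})^{\lambda-1}$. Iterating through $k=3,\ldots,\lambda$ yields an expansion $x = \sum_{k=0}^{\lambda-1} f_n^k\alpha_k + f_n^\lambda\beta_\lambda$ with $\alpha_k \in (f_1,\ldots,f_{n-1})^{\lambda-k}$, each summand visibly in $I^\lambda$. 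The principal technical obstacle is the preliminary regularity claim for $f_n$ modulo $(f_1,\ldots,f_{n-1})^m$; this must be in hand before the $\lambda$-cascade can proceed, and it is what forces the induction to run in the order "full statement at stage $n-1$ $\Rightarrow$ regularity at stage $n$ $\Rightarrow$ $\lambda$-induction at stage $n$."
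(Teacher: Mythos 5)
Your proof is correct, but it takes a genuinely different route from the paper's. The paper proves the containment $\supseteq$ by a single induction on $\lambda$ with $n$ fixed: given $x$ in the intersection, the inductive hypothesis places $x\in I^{\lambda-1}$, one writes $x=\sum a_{\beta}f_1^{\beta_1}\cdots f_n^{\beta_n}$ over exponents with $\beta_1+\cdots+\beta_n=\lambda-1$, and then for each such $\beta$ the tuple $(\beta_1+1,\ldots,\beta_n+1)$ sums to $\lambda+n-1$, so $x$ and every term with exponent different from $\beta$ lie in $(f_1^{\beta_1+1},\ldots,f_n^{\beta_n+1})$; \autoref{lemmaColonRegularSequence} then forces $a_\beta\in I$, whence $x\in I\cdot I^{\lambda-1}=I^\lambda$. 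You instead induct on $n$, regroup the intersection according to the value of $\lambda_n=k$, pass to $R/(f_n^k)$, and run a cascade producing the $f_n$-adic expansion $x=\sum_k f_n^k\alpha_k+f_n^\lambda\beta_\lambda$ with $\alpha_k\in(f_1,\ldots,f_{n-1})^{\lambda-k}$; this is a different decomposition, somewhat longer but arguably more structural, and it isolates the role of $f_n$ being a nonzerodivisor modulo $(f_1,\ldots,f_{n-1})^m$. Two small bookkeeping points: the identity $((f_1^{a_1},\ldots,f_{n-1}^{a_{n-1}}):f_n)=(f_1^{a_1},\ldots,f_{n-1}^{a_{n-1}})$ is not literally an instance of \autoref{lemmaColonRegularSequence}, whose ideal contains a positive power of every $f_i$ including $f_n$; it is instead the statement that $f_1^{a_1},\ldots,f_{n-1}^{a_{n-1}},f_n$ is a regular sequence, which---like your claim that $f_1,\ldots,f_{n-1}$ remains a permutable regular sequence on $R/(f_n^k)$---rests on the classical (and standard, also valid for non-Noetherian rings) fact that powers of a permutable regular sequence again form a permutable regular sequence; you should cite or prove that fact rather than the colon lemma. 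Also, your inner induction on $\lambda$ is never actually invoked: the cascade uses only the stage-$(n-1)$ statement (for various exponents) together with the regularity claim, so a single induction on $n$ suffices.
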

\begin{proof}
    The containment $(\subseteq)$ follows from the pigeonhole principle and so we prove ($\supseteq$).

	We proceed by induction on $\lambda$.
	When $\lambda=1$ and $(\lambda_1,\ldots, \lambda_n)$ is an $n$-tuple of strictly positive integers with 
	$\lambda_1+\ldots+ \lambda_n=\lambda+n-1=n$, it follows that $\lambda_i=1$ for all $i$ so the statement reduces to 
	$$I=(f_1,\ldots, f_n)R.$$

	Suppose 
	$$I^{\lambda-1}=\bigcap\limits_{\lambda_1,\ldots, \lambda_n}(f_1^{\lambda_1},\ldots, f_n^{\lambda_n})R$$
	where $(\lambda_1,\ldots, \lambda_n)$ runs through all $n$-tuples of strictly positive integers with 
	$\lambda_1+\ldots+ \lambda_n=\lambda-1+n-1$.
	Let $x\in \bigcap\limits_{\gamma_1,\ldots, \gamma_n}(f_1^{\gamma_1},\ldots, f_n^{\gamma_n})R$ where $(\gamma_1,\ldots, \gamma_n)$ runs through all $n$-tuples of strictly positive integers with 
	$\gamma_1+\ldots+ \gamma_n=\lambda+n-1$.
	By the induction hypothesis, $x\in I^{\lambda-1}$, so
	$$x=\sum_{\substack{\alpha_1+\alpha_2+\ldots+ \alpha_n\\=\lambda-1}}a_{\alpha_1,\alpha_2,\ldots, \alpha_n}f_1^{\alpha_1}\cdots f_n^{\alpha_n}.$$
	Note that if $\beta_1+\beta_2+\ldots+ \beta_n=\lambda-1$, for $(\alpha_1,\alpha_2,\ldots, \alpha_n)\neq (\beta_1,\beta_2,\ldots, \beta_n)$, 
	we have that $a_{\alpha_1,\alpha_2,\ldots, \alpha_n}f_1^{\alpha_1}\cdots f_n^{\alpha_n}\in (f_1^{\beta_1+1},\ldots, f_n^{\beta_n+1})$
	since $\alpha_i>\beta_i$ for some $i$.
	Thus, $$a_{\beta_1,\beta_2,\ldots, \beta_n}f_1^{\beta_1}\cdots f_n^{\beta_n}\in (f_1^{\beta_1+1},\ldots, f_n^{\beta_n+1}).$$ 
	By \autoref{lemmaColonRegularSequence}, it follows that $a_{\beta_1,\beta_2,\ldots, \beta_n}\in I$ for all $(\beta_1,\beta_2,\ldots,\beta_n)$ with $\beta_1+\beta_2+\ldots+ \beta_n=\lambda-1$. As a consequence, we have that $x\in I^\lambda$.
\end{proof}

We can now extend our result beyond $\lambda=1$ just as in \cite{LipmanTeissierPseudoRational}.

\begin{theorem}\label{MainTheorem.ParameterIdealCaseGeneralized}
    Suppose $(R, \fram)$ is an excellent local domain and $f_1, \dots, f_n$ is a partial system of parameters generating an ideal $J$. 
    Then for any integer $\lambda > 0$
    \[
        \overline{(f_1, \dots, f_n)^{\lambda+n-1}}  \subseteq (f_1, \dots, f_n)^{\lambda} C
    \]
    where $C$ is a Rees-$J$ BCM $R$-algebra.
\end{theorem}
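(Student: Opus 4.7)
The plan is to combine Hochster's intersection argument (as in Section 3 of \cite{LipmanTeissierPseudoRational}) with a slight enhancement of the cohomological argument used in \autoref{MainTheorem.ParameterIdealCase}, keeping the same Rees-$J$ BCM algebra $C$ throughout.

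Fix $h \in \overline{J^{\lambda+n-1}}$ and let $C$ be a Rees-$J$ BCM $R$-algebra. Because $C$ is balanced BCM, $f_1,\ldots,f_n$ is a permutable regular sequence on $C$, and \autoref{LemmaIntersectionsOfParameterIdeals} applied in $C$ yields
\[
J^{\lambda} C \;=\; \bigcap_{\substack{\lambda_1+\cdots+\lambda_n=\lambda+n-1 \\ \lambda_i \geq 1}} (f_1^{\lambda_1},\ldots,f_n^{\lambda_n})\, C.
\]
It therefore suffices to fix one such tuple, set $g_i = f_i^{\lambda_i}$, and show $h \in (g_1,\ldots,g_n)C$. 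Since $g_1,\ldots,g_n$ is again a permutable regular sequence on $C$, the standard colon argument at the end of \autoref{MainTheorem.ParameterIdealCase} (via \autoref{lemmaColonRegularSequence}) reformulates this membership as the vanishing of the class $[h/(g_1\cdots g_n)]$ in $H^n_J(C)$.

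To obtain this vanishing, we upgrade \autoref{lem.LipmanTeissierWhichCohomologySentToZero}. On the chart $X_i$ of the normalized blowup $Y = \Proj S$ of $J$ where $J\cO_Y = f_i\cO_Y$, we have $J^{\lambda+n-1}\cO_Y = f_i^{\lambda+n-1}\cO_Y$ and the key identity
\[
g_1\cdots g_n \;=\; f_1^{\lambda_1}\cdots f_n^{\lambda_n} \;=\; f_i^{\lambda+n-1}\cdot\prod_{j}(f_j/f_i)^{\lambda_j},
\]
valid because $\sum_j \lambda_j = \lambda+n-1$. Since $h \in \overline{J^{\lambda+n-1}} \subseteq \Gamma(Y, J^{\lambda+n-1}\cO_Y)$ and $\prod_j(f_j/f_i)^{\lambda_j}$ is a unit on $V = \bigcap_i X_i$ (all ratios $f_j/f_i$ being mutually inverse sections of $\cO_V$), the quotient $h/(g_1\cdots g_n)$ lies in $\Gamma(V,\cO_Y)$ and defines a \Cech class in $H^{n-1}(Y,\cO_Y)$ whose restriction to $U = Y\setminus E$ is identified with $[h/(g_1\cdots g_n)] \in H^n_J(R)$. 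Thus this class lies in the kernel of $H^n_J(R) \to H^n_E(Y,\cO_Y)$.

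The remainder of the proof of \autoref{MainTheorem.ParameterIdealCase} now applies with only the obvious replacement of $[h/(f_1\cdots f_n)]$ by $[h/(g_1\cdots g_n)]$: the Sancho de Salas sequence \autoref{SanchoDeSalasNonMaximalVariant} places the class in the image of $H^n_{J'}(S_{\frn}) \to H^n_J(R)$ for $J' = JS + S_{>0}$ of height $n+1$; the Rees-$J$ hypothesis supplies a BCM $S_{\frn}$-algebra $B$ compatible with $C$; grade vanishing \autoref{lem.GradeVanishingFact} gives $H^n_{J'}(B) = 0$; and a diagram chase then sends $[h/(g_1\cdots g_n)]$ to zero in $H^n_J(C)$, yielding $h \in (g_1,\ldots,g_n)C$. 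The main obstacle is the upgrade in the third paragraph: the point is to produce a \Cech lift of $[h/(g_1\cdots g_n)]$ on the original $Y = \Proj S$, so that we may reuse the given Rees-$J$ BCM algebra $C$ without invoking a separate Rees-$(g_1,\ldots,g_n)$ BCM algebra (which \emph{a priori} need not be compatible with $C$). The identity $\sum_j\lambda_j = \lambda+n-1$ is precisely what forces the exponents of $f_i$ to match in the chart computation and makes the lift possible.
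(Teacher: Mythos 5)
Your proposal is correct, but its core step runs along a genuinely different track than the paper's. The paper handles a fixed tuple $(\lambda_1,\ldots,\lambda_n)$ with $\sum_i \lambda_i=\lambda+n-1$ by Hochster's trick: setting $\mu=\max_i\lambda_i$, one multiplies $h$ by $f_1^{\mu-\lambda_1}\cdots f_n^{\mu-\lambda_n}$ to land in $\overline{(f_1^{\mu},\ldots,f_n^{\mu})^{n}}$, applies the $\lambda=1$ case (\autoref{MainTheorem.ParameterIdealCase}) to the auxiliary ideal $(f_1^{\mu},\ldots,f_n^{\mu})$ with the same $C$, and then colons back via \autoref{lemmaColonRegularSequence} to get $h\in(f_1^{\lambda_1},\ldots,f_n^{\lambda_n})C$; both arguments then finish identically with \autoref{LemmaIntersectionsOfParameterIdeals}. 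You instead reopen the cohomological engine of \autoref{MainTheorem.ParameterIdealCase}: since $f_1^{\lambda_1}\cdots f_n^{\lambda_n}$ generates $J^{\lambda+n-1}\cO_V$ on $V=\bigcap_i X_i$ and $h\in\overline{J^{\lambda+n-1}}\subseteq\Gamma(Y,J^{\lambda+n-1}\cO_Y)$, the class $[h/(f_1^{\lambda_1}\cdots f_n^{\lambda_n})]$ lifts to a \Cech class on $Y=\Proj S$ and hence dies in $H^n_E(Y,\cO_Y)$, after which the Sancho de Salas sequence, the Rees-$J$ diagram, and grade vanishing run verbatim. Both routes are valid. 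Yours buys something concrete: you never invoke \autoref{MainTheorem.ParameterIdealCase} for the power ideal $(f_1^{\mu},\ldots,f_n^{\mu})$, so you avoid the point, left implicit in the paper's reduction, that a Rees-$J$ BCM algebra also serves the ideal of $\mu$-th powers (whose partially normalized Rees algebra is a Veronese of $S$, so this is true but deserves a word); working on the original $Y$ with the original $B\to C$ sidesteps it. The paper's route buys a clean black-box, purely ideal-theoretic reduction to the $\lambda=1$ statement, with no need to re-verify the \Cech computation for weighted monomials. Two small points to tidy if you write yours up in full: treat $n=1$ separately, as in \autoref{lem.LipmanTeissierWhichCohomologySentToZero}, since the identification of $[h/(g_1\cdots g_n)]$ with a class coming from $H^{n-1}(U,\cO_Y)$ uses $n\geq 2$; and record explicitly that $f_1^{\lambda_1},\ldots,f_n^{\lambda_n}$ is a permutable regular sequence on $C$ (any permutation of it extends to a system of parameters of $R$, and $C$ is balanced), which is what makes both the colon step and the injectivity of $C/(f_1^{\lambda_1},\ldots,f_n^{\lambda_n})C\to H^n_J(C)$ legitimate.
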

\begin{proof}
	Let $(\lambda_1,\ldots, \lambda_n)$  be an $n$-tuple of strictly positive integers with 
	$\lambda_1+\ldots+ \lambda_n=\lambda+n-1$, $\mu=\max\{\lambda_1,\ldots, \lambda_n\}$ and $h\in  \overline{(f_1, \dots, f_n)^{\lambda+n-1}}$. Then, we have that
	$$f_1^{\mu-\lambda_1}\cdots f_n^{\mu-\lambda_n}h\in J^{n\mu-\lambda-n+1}\overline{(f_1, \dots, f_n)^{\lambda+n-1}}\subseteq \overline{(f_1, \dots, f_n)^{n\mu}}.$$
	Since $\overline{(f_1, \dots, f_n)^{n\mu}}=\overline{(f_1^\mu, \dots, f_n^{\mu})^{n}}$ and since,
	by \autoref{MainTheorem.ParameterIdealCase},
	$\overline{(f_1^\mu, \dots, f_n^\mu)^{n}}  \subseteq (f_1^\mu, \dots, f_n^\mu) C$, we obtain that 
	$$h\in (f_1^\mu, \dots, f_n^\mu) C: f_1^{\mu-\lambda_1}\cdots f_n^{\mu-\lambda_n}.$$
	Using \autoref{lemmaColonRegularSequence}, it follows that $h\in (f_1^{\lambda_1}, \dots, f_n^{\lambda_n}) C$.
	Since $\lambda_1,\ldots, \lambda_n$ are arbitrary, by \autoref{LemmaIntersectionsOfParameterIdeals},
	$h\in (f_1,\ldots, f_n)^{\lambda}C$. Hence, we obtain 
	$$\overline{(f_1, \dots, f_n)^{\lambda+n-1}}\subseteq (f_1,\ldots, f_n)^{\lambda}C.$$
\end{proof}

\section{The general case, non-parameter ideals} 
We will now follow an argument of Hochster-Huneke \cite[Theorem 7.1]{HochsterHunekeApplicationsofBigCM} to extend the main result to ideals that are not necessarily generated by partial systems of parameters.  

\begin{theorem}\label{MainTheorem.FullVersion}
    Suppose $(R, \fram)$ is an excellent local domain and $f_1, \dots, f_n\in \fram$.  Suppose we have a weakly functorial BCM assignment on the category $\sD_R$ from \autoref{eq.OurCat.WeaklyFunctorialBCMAssignments}, $S \mapsto B_S$.
    Then 
    \[
        \overline{(f_1, \dots, f_n)^{\lambda+n-1}}  \subseteq (f_1, \dots, f_n)^{\lambda} B_R.
    \]
\end{theorem}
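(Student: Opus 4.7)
Fix $h\in\overline{(f_1,\dots,f_n)^{\lambda+n-1}}$ and write $I=(f_1,\dots,f_n)$. The plan is to reduce to the parameter-ideal case \autoref{MainTheorem.ParameterIdealCaseGeneralized} by a generic-extension argument in the style of Hochster-Huneke, and then to transport the result back to $R$ using the weak functoriality of the BCM assignment on $\sD_R$.

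Concretely, I would adjoin indeterminates $y_{ij}$ to $R$ and pass to a local ring $S = R[y_{ij}]_\frn \in \sD_R$, where the maximal ideal $\frn$ is chosen so that (a) there is a ``retraction'' surjection $\pi\colon S\twoheadrightarrow R$ in $\sD_R$, specializing each $y_{ij}$ to a chosen value of $R$, and (b) $n$ generic combinations $g_1,\dots,g_n\in S$ of the $f_j$ and the $y_{ij}$ form a partial system of parameters on $S$ with $\pi(g_i)\in I$ and $h\in\overline{(g_1,\dots,g_n)^{\lambda+n-1}}$ in $S$. Apply \autoref{MainTheorem.ParameterIdealCaseGeneralized} in $S$ to $(g_1,\dots,g_n)$; the necessary Rees-$(g_i)$-functorial BCM $S$-algebra $B_S$ is furnished by our weakly functorial assignment applied to the surjection from the localized normalized Rees algebra $\widetilde{S}_{\widetilde{\frn}}\twoheadrightarrow S$, which is itself a morphism of $\sD_R$ by the excellence hypothesis. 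This yields $h\in(g_1,\dots,g_n)^\lambda B_S$. Weak functoriality applied to $\pi$ then produces a ring map $B_S\to B_R$ carrying $h\mapsto h$ and $(g_1,\dots,g_n)^\lambda B_S$ into $(\pi(g_1),\dots,\pi(g_n))^\lambda B_R\subseteq I^\lambda B_R$, as required.

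The main obstacle is the simultaneous construction in (b): producing elements $g_i\in S$ that are both a partial system of parameters of height $n$ and sufficiently close to the $f_j$ so that the integral relation $h\in\overline{I^{\lambda+n-1}}$ upgrades to $h\in\overline{(g_1,\dots,g_n)^{\lambda+n-1}}$ after passage to $S$. A naive choice $g_i=\sum_j y_{ij}f_j$ lies inside $IS$ and so has height at most $\height I$, which may be strictly less than $n$; while a ``shift'' such as $g_i=f_i+y_i$ is a genuine partial system of parameters of height $n$ in $S$ but severs the direct integral link with $I$. Executing the correct Hochster-Huneke generic computation---balancing these considerations via a well-chosen $\frn$ and an appropriate definition of the $g_i$, so that the specialization under $\pi$ lands back in $I$ while the integral equation satisfied by $h$ survives the extension to $S$---is the technical heart of the proof, and is also where the flexibility of requiring only weak (not strict) functoriality becomes essential.
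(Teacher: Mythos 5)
Your overall strategy (reduce to \autoref{MainTheorem.ParameterIdealCaseGeneralized} via a Hochster--Huneke-style generic computation, then push forward along $B_S\to B_R$ using weak functoriality on $\sD_R$) is the right one, but the proposal stops exactly at the step that constitutes the proof: you acknowledge that you do not know how to produce the partial system of parameters $g_1,\dots,g_n$ in your auxiliary ring $S=R[y_{ij}]_{\frn}$, and the difficulty you describe is genuine within the framework you set up. If you keep $h$ as the fixed element of $R\subseteq S$ and insist that the new parameters be combinations of the $f_j$ (so they specialize into $I$ under a retraction), you are caught between the two failures you name: elements of $IS$ can never have height exceeding $\height I$, while generic translates $f_i+y_i$ destroy the integral dependence of $h$. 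No choice of $\frn$ fixes this, because the obstruction is that you are trying to keep both $h$ and the coefficients of its integral equation rigid while genericizing only the generators.

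The missing idea is to genericize \emph{everything} in the integral equation, including $h$ itself. Writing $c=\lambda+n-1$ and expanding the equation of integral dependence $h^n+a_1h^{n-1}+\cdots+a_n=0$ with $a_i\in (I^c)^i$ as a sum of monomials $a_{\alpha_1,\dots,\alpha_n}f_1^{\alpha_1}\cdots f_n^{\alpha_n}$, one adjoins fresh indeterminates $x_i$, $y_{\alpha_1,\dots,\alpha_n}$, $z$ lifting $f_i$, $a_{\alpha_1,\dots,\alpha_n}$, $h$, forms $T=R[\underline{x},\underline{y},z]$, and imposes the single generic relation
\[
g=z^n-\sum_{i=1}^n\Big(\sum_{\alpha_1+\cdots+\alpha_n=ci} y_{\alpha_1,\dots,\alpha_n}x_1^{\alpha_1}\cdots x_n^{\alpha_n}\Big)z^{n-i}.
\]
One checks $g$ is irreducible (it is linear in $y_{cn,0,\dots,0}$ with coefficient $x_1^{cn}$ coprime to the constant term), so $S=(T/(g))_P$ is an excellent local domain, where $P$ is the preimage of $\fram$ under the induced surjection $\psi\colon T/(g)\to R$; this surjection is a morphism of $\sD_R$. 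Because only the one relation $g$ has been imposed, $x_1,\dots,x_n$ is automatically a partial system of parameters on $S$ (as $x_1,\dots,x_n,g$ is part of a system of parameters on the localized polynomial ring), and by construction $z\in\overline{(x_1,\dots,x_n)^{c}}$ in $S$. Now \autoref{MainTheorem.ParameterIdealCaseGeneralized} applied in $S$ gives $z\in(x_1,\dots,x_n)^{\lambda}B_S$, and the map $B_S\to B_R$ supplied by weak functoriality sends $z\mapsto h$ and $x_i\mapsto f_i$, yielding $h\in(f_1,\dots,f_n)^{\lambda}B_R$. In short: the element whose closure membership you track in $S$ should not be $h$ but a generic lift $z$ of it; this is precisely what lets the integral relation survive while the $x_i$ remain parameters, and it is the step your proposal leaves unresolved.
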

\begin{proof}
Let $c=\lambda+n-1$, $I=(f_1,\ldots,f_n)$ and suppose $h\in \overline{I^{c}}$. 
Then, $h^n+a_1h^{n-1}+a_2h^{n-2}+...+a_n=0$ for some $a_i\in (I^{c})^i$; that is,
$$h^n-\sum_{i=1}^n\left(\sum_{\substack{\alpha_1+\ldots+\alpha_n\\=ci}} a_{\alpha_1,\ldots, \alpha_n}f_1^{\alpha_1}\cdots f_n^{\alpha_n}\right)h^{n-i}=0$$
for some $a_{\alpha_1,\ldots, \alpha_n}\in R$.

For each $i\in\{1,\ldots, n\}$ and for each $\alpha_1+\ldots+\alpha_n=ci$, let $x_i, y_{\alpha_1,\ldots, \alpha_n},$ and $z$ be indeterminates.
Let $T=R[\underline{x}, \underline{y}, z]$ and consider the $R$-algebra map $\varphi:T\rightarrow R$ determined by
$\varphi(y_{\alpha_1,\ldots, \alpha_n})=a_{\alpha_1,\ldots, \alpha_n}$, $\varphi(x_i)=f_i$ and $\varphi(z)=h$. 
Then, if $$g=z^n-\sum_{i=1}^n\Big(\sum_{\substack{\alpha_1+\ldots+\alpha_n\\=ci}} y_{\alpha_1,\ldots, \alpha_n}x_1^{\alpha_1}\cdots x_n^{\alpha_n}\Big)z^{n-i},$$ 
$g\in \ker \varphi$ and so we have an induced map $\psi: T/(g)\rightarrow R$. 
Note that $g$ is linear as a polynomial on $y_{cn,0,\ldots, 0}$.
Additionally, the coefficient of $y_{cn,0,\ldots, 0}$ in $g$, which is $x_1^{cn}$, and the constant term of $g$ as a  polynomial in $y_{cn,0,\ldots, 0}$ are relatively prime. 
It follows that $g$ is irreducible and $T/(g)$ is a domain.  

Let $P=\psi^{-1}(\fram)$ and $S=(T/(g))_P$.  Observe that $x_1, \dots, x_n$ is a partial system of parameters on $S$ as $x_1, \dots, x_n, g$ is a partial system of parameters on $T$ localized at the inverse image of $\fram$.

We have a ring homomorphism $S\rightarrow R$ and $S$ is an excellent local domain.
Then there exists a commutative diagram:
   \[
        \xymatrix{
            S \ar[d] \ar@{->>}[r] & R \ar[d] \\
            B_S \ar[r] & B_R.
        }
    \]
Since $z\in\overline{(x_1,\ldots,x_n)^{c}}$ and $x_1,\ldots,x_n$ is a partial system of parameters, it follows from \autoref{MainTheorem.ParameterIdealCaseGeneralized} that $z\in (x_1,\ldots, x_n)^{\lambda}B_S$ and, taking images under the map $B_S\rightarrow B_R$, we obtain that  $z\in (f_1,\ldots, f_n)^{\lambda}B_R$.
\end{proof}

Note Hochster-Huneke carried out essentially 
the same argument as part of proving Brian\c{c}on-Skoda for $+$-closure in characteristic $p>0$.  In their argument they set $T = \bF_p[\underline{x}, \underline{y}, z]$, and while we could replace $\bF_p$ with $\bZ$, our choice keeps us in a similar class of rings (ie, equal characteristic $0$ or $p > 0$ stays equal characteristic) where numerous weak functoriality results have previously been shown.

\begin{remark}
    \label{rem.Functoriality}
    We can now explain somewhat more precisely what functoriality we need.  Indeed, fixing a $\lambda$, for each generator $h_i$ of $\overline{J^{n+\lambda -1}}$ we can associate $R_i = (T_i/(g_i))_{P_i}$ as in the proof above with a corresponding $z_i \in S_i$ mapping to $h_i$.  That $z_i$ is in the integral closure of an $n$-generated parameter ideal $J_i \subseteq R_i$ (mapping to $J = (f_1, \dots, f_n)$).  Hence using a Rees-$J_i$ BCM algebra for $R_i$ is sufficient as long as it maps weakly functorially to BCM $R$-algebra.  In particular, for a fixed $\lambda$, we need only a BCM algebra assignment for a certain finite tree of maps rooted at $R$ (the final object of the associated finite category).
\end{remark}

\section{An application and further thoughts}

Utilizing \cite{BhattAbsoluteIntegralClosure}, \cite[Corollary 2.10]{BMPSTWW-MMP},  which proves that the $p$-adic completion of the absolute integral closure is balanced big Cohen-Macaulay (and certainly sufficiently weakly functorial for our purposes), we obtain the following.

\begin{corollary}
    \label{cor.HeitmannExtension}
    Suppose $(R, \fram)$ is an excellent Noetherian local domain of mixed characteristic $(0, p> 0)$.  Let $\widehat{R^+}$ denote the $p$-adic completion of the absolute integral closure of $R$.  Then for any ideal $J \subseteq R$ generated by $n$ elements, and any integer $\lambda \geq 1$
    \[
        \overline{J^{n + \lambda-1}} \subseteq J^{\lambda} \widehat{R^+} \cap R.
    \]
\end{corollary}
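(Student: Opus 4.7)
The plan is to deduce this corollary as a direct specialization of the Main Theorem \autoref{MainTheorem.FullVersion} applied to the BCM assignment $S \mapsto \widehat{S^+}$. So the work reduces to verifying that this assignment is weakly functorial on the category $\sD_R$ from \autoref{eq.OurCat.WeaklyFunctorialBCMAssignments}, and that each $\widehat{S^+}$ is indeed a balanced BCM $S$-algebra.

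First, I would verify the BCM property: for any excellent Noetherian local domain $(S, \frn)$ of mixed characteristic $(0,p)$, the $p$-adic completion $\widehat{S^+}$ of the absolute integral closure is a balanced big Cohen-Macaulay $S$-algebra. This is precisely the content of \cite{BhattAbsoluteIntegralClosure} together with \cite[Corollary 2.10]{BMPSTWW-MMP}, which ensures the statement holds in the generality of excellent local domains in mixed characteristic (note that every object of $\sD_R$ is essentially of finite type over the excellent $R$ and hence itself excellent, and remains of mixed characteristic since $p \in \fram \subseteq \frn$).

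Next, I would establish weak functoriality for the $R$-algebra surjections that appear in $\sD_R$. Given such a surjection $f: S \twoheadrightarrow S'$ between local domains (both receiving local maps from $R$), I choose algebraic closures and use the universal property of the absolute integral closure: the composition $S \to S' \hookrightarrow (S')^+$ extends (non-canonically, by a standard Zorn's lemma argument applied to the algebraic integral closure inside a chosen algebraic closure of $\Frac(S')$) to a ring homomorphism $S^+ \to (S')^+$ making the obvious square commute. Since such a map is automatically continuous with respect to the $p$-adic topologies (as $p \in R$ maps to $p$ on both sides), it extends after $p$-adic completion to $\widehat{S^+} \to \widehat{(S')^+}$ compatible with $f$. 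This gives the required commuting square in the definition of weakly functorial BCM assignment.

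With weak functoriality established, I apply \autoref{MainTheorem.FullVersion} to the assignment $S \mapsto B_S := \widehat{S^+}$: for any ideal $J \subseteq R$ generated by $n$ elements and any $\lambda \geq 1$,
\[
\overline{J^{n+\lambda-1}} \subseteq J^{\lambda} \widehat{R^+}.
\]
Intersecting with $R$ (and noting $\overline{J^{n+\lambda-1}} \subseteq R$ automatically) yields the stated containment. The only step with subtlety is the weak functoriality of the absolute integral closure under the specific surjections of \autoref{rem.Functoriality}, but this is a standard consequence of the universal property of $(-)^+$ and is exactly the kind of functoriality already invoked in \cite{AndreWeaklyFunctorialBigCM} and the references cited after \autoref{def.WeaklyFunctorialBCMAssignments}; no deeper input beyond Bhatt's BCM theorem is needed.
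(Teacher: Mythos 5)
Your proposal is correct and follows the same route as the paper: the paper's proof of \autoref{cor.HeitmannExtension} simply invokes \cite{BhattAbsoluteIntegralClosure} and \cite[Corollary 2.10]{BMPSTWW-MMP} for the BCM property of $\widehat{S^+}$, notes that the assignment is weakly functorial (which you spell out via the standard extension of $S \to (S')^+$ to $S^+ \to (S')^+$ followed by $p$-adic completion, a detail the paper leaves as ``certainly sufficiently weakly functorial''), and applies \autoref{MainTheorem.FullVersion}.
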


\begin{remark}
This is very close to several results of Heitmann.  In \cite[Theorem 2.13]{HeitmannPlusClosureMixedChar}, he proved that if $J = (f_1, \dots, f_n)$ and $p \in \sqrt{(x_1,x_2)}$, then $\overline{J^{n+\lambda-1}} \subseteq J^{\lambda} R^+$ and so we recover that result in our setting of an \emph{excellent} ring.  The point being that since a power of $p$ is in $J$, then $J R^+ \cap R = J \widehat{R^+} \cap R$.  In \cite[Theorem 2.13]{HeitmannPlusClosureMixedChar}, Heitmann also proved that $\overline{J^{n + \lambda}} \subseteq J^{\lambda} R^+ \cap R$ in general.

On the other hand, in \cite[Theorem 4.2]{HeitmannExtensionsOfPlusClosure}, Heitmann proved that $\overline{J^{n+\lambda-1}} \subseteq (J^{\lambda})^{\mathrm{epf}}$.
Because of \cite{BrennerMonsky}, we might expect that $J^{\mathrm{epf}} \supsetneq J \widehat{R^+} \cap R$ in general.  It seems possible that $J^{\mathrm{epf}}$ and $J \widehat{R^+} \cap R$ agree for parameter ideals $J$ based on the characteristic $p > 0$ picture, see \cite{SmithTightClosureParameter}.
\end{remark}

\begin{remark}[Comparison with multiplier/test ideal Skoda theorems]
    Another way to obtain Brian\c{c}on-Skoda-type theorems is to use the Skoda-type theorems for multiplier or test ideals.  

    For example, if $R$ is essentially of finite type over $\bC$ and is normal and $\bQ$-Gorenstein, one always has the following formula for multiplier ideals and any $\lambda \geq 0$: 
    \[
        \mJ(R, J^{n+{\lambda}}) = J \cdot \mJ(R, J^{n+{\lambda} - 1}),
    \]
    see for instance \cite[Section 9.6]{LazarsfeldPositivity2}.  
    Indeed, we then have that 
    \[
        \overline{J^{n+\lambda-1}}\cdot \mJ(R) \subseteq \mJ(R, \overline{J^{n+\lambda-1}}) = \mJ(R, {J^{n+\lambda-1}}) = J^{\lambda} \mJ(R, J^{n-1}) \subseteq J^{\lambda}.
    \]
    The same formula holds for test ideals in $F$-finite or excellent local Noetherian domains of characteristic $p > 0$ \cite[Theorem 4.1]{HaraTakagiOnAGeneralizationOfTestIdeals}, \cf \cite[Theorem 2.1]{HaraYoshidaGeneralizationOfTightClosure}.  A version was proved for the $+$-test ideal in \cite[Corollary 6.7]{HaconLamarcheSchwede} when $R$ is complete, normal and $\bQ$-Gorenstein.  
    
    However, these sorts of formulations, such as:
    \[
        \tau(R) \cdot \overline{J^{n+\lambda-1}} \subseteq J^{\lambda},
    \]
    are straightforward corollaries of the closure variants.  Indeed, if $\overline{J^{n+\lambda-1}} \subseteq (J^{\lambda})^*$, then since $\tau(R) \cdot I^* \subseteq I$ for any $I$, we obtain $\tau(R)  \cdot \overline{J^{n+\lambda-1}} \subseteq J^{\lambda}$.
    For more discussion on variants of test ideals associated to BCM algebras and these sorts of closure properties as well as their relation to the associated test ideal theory, see \cite[Corollary 3.3.3]{DattaTuckerOpenness} and \cite{PerezRG}.
\end{remark}



\bibliographystyle{skalpha}
\bibliography{main}

\end{document}